\crefname{hypothesis}{Hypothesis}{Hypotheses}
\title{Euclidean Distance Bounds for LMI Analytic Centers using a Novel Bound on the Lambert Function}
\author{Biel Roig-Solvas\thanks{Department of Electrical and Computer Engineering, Northeastern University, Boston, USA
  (\email{biel@ece.neu.edu}, \email{msznaier@coe.neu.edu}).}
  \and Mario Sznaier\footnotemark[1]}
\newcommand*{\addFileDependency}[1]{
  \typeout{(#1)}
  \@addtofilelist{#1}
  \IfFileExists{#1}{}{\typeout{No file #1.}}
}
\newcommand*{\myexternaldocument}[1]{%
    \externaldocument{#1}%
    \addFileDependency{#1.tex}%
    \addFileDependency{#1.aux}%
}
\begin{document}

\maketitle

\begin{abstract}
Linear matrix inequalities (LMIs) are ubiquitous in modern control theory, as well as in a variety of other fields in science and engineering. Their analytic centers, i.e. the maximum determinant elements of the feasible set spanned by these LMIs, are the solution of many well-known problems in statistics, communications, geometry, and control and can be approximated to arbitrary precision by semidefinite programs (SDPs). 
The quality of these approximations is measured with respect to the difference in log-determinant of both the exact and the approximate solution to these SDPs, a quantity that follows directly from the duality theory of semidefinite programming. 
However, in many applications the relevant parameters  are functions of the entries of the LMI argument $X$. In these cases it is of interest to develop metrics that quantify the quality of approximate solutions based on the error on these parameters, something that the log-determinant error fails to capture due to the non-linear interaction of all the matrix entries. 
In this work we develop upper bounds on the Frobenius norm error between suboptimal solutions $X_f$ and the exact optimizer $X_*$ of maximum determinant problems, a metric that provides a direct translation to the entry-wise error of $X$ and thus to the relevant parameters of the application.
We show that these bounds can be expressed through the use of the Lambert function $W(x)$, i.e. the solution of the equation $W(x) e^{W(x)}= x$, and derive novel bounds for one of its branches to generate efficient closed-form bounds on the Euclidean distance to the LMI analytic center. Finally, we test the quality of these bounds numerically in the context of interior point methods termination criteria.
\end{abstract}

\begin{keywords}
  Linear Matrix Inequalities, Maximum Determinant, Bounds
\end{keywords}

\begin{AMS}
  65K10, 
\end{AMS}
\section{Introduction}\label{1:sec}
Since their popularization in the 1990s, linear matrix inequalties (LMIs) have become an essential tool for the study of many problems in control theory  (e.g $\mathcal{H}_2$ and $\mathcal{H}_\infty$ synthesis, robust stability analysis). Further, their wide applicability to problems involving matrix spectra together with the development of efficient numerical methods to solve convex optimization problems \cite{Karmakar1984,Nesterov1994} have made LMIs and semidefinite programming (SDP) ubiquitous in science and engineering \cite{Boyd1994,Vander1996,Wolkowicz2000,Anjos2011}. A particular problem involving LMIs is that of computing the analytic center of a set of LMIs, i.e. the maximum determinant matrix inside the feasible set spanned by those LMIs. This problem, also known as the maximum determinant (\textit{maxdet}) problem, has applications in a variety of fields like statistics, geometry, communications and control (see \cite{Vander1998} for a thorough review of the applications of \textit{maxdet}). The \textit{maxdet} problem can be stated as:
\begin{equation}
    \operatorname*{max}_X\; |X| \quad \text{s.t.}\;X \in \mathcal{D}, X \succeq 0
    \label{1:eq:maxdet}
\end{equation}
where $\mathcal{D}$ is a convex set and $X \succeq 0$ indicates that $X$ is positive semidefinite, and to avoid the degenerate case we assume that there exists one $X\in \mathcal{D}$ for which $X\succ 0$. This problem is often cast as the equivalent problem:
\begin{equation}
    \operatorname*{min}_X\; -\log\left(|X|\right) \quad \text{s.t.}\;X \in \mathcal{D}, X \succeq 0
    \label{1:eq:minlogdet}
\end{equation}
that is more tractable due to the convexity, smoothness and self-concordance of the $-\log\left(|X|\right)$ function in the cone of PSD matrices \cite{Boyd2004}. Problems of the form \eqref{1:eq:minlogdet} can be numerically solved using SDP algorithms such as interior point methods, which can solve \eqref{1:eq:minlogdet} up to any accuracy $\epsilon$, as measured by the optimality gap:
\begin{equation}
    \epsilon \geq \log\left(|X_*|\right) -\log\left(|X_f|\right) \geq 0
    \label{1:eq:costbound}
\end{equation}
where $X_*$ is the exact optimizer of \eqref{1:eq:minlogdet} and $X_f$ is a feasible solution to the same problem. Bounds of the form \eqref{1:eq:costbound} on the suboptimality of the cost function are usually easy to obtain, through self-concordance of the cost function or using duality \cite{Boyd2004} (in fact, standard interior point solvers routinely compute this bound to decide their termination).

\textcolor{black}{However, in a large number of  applications the parameters of interest  either depend on entries of the matrix variable $X$ or are entries of $X$ themselves. For instance, set-theoretic  estimation and control methods seek to find sets that satisfy suitable properties \cite{blanchini2007}. While usually convex, these sets are generally hard to characterize and thus, in many instances are approximated either by ellipsoids of the form $x^T X x \leq 1$ or the intersection of such ellipsoids. In the context of worst case estimation, the goal here is to use noisy measurement to find the minimum volume ellipsoid known to contain the state of a system \cite{Schweppe68}, which can be posed as a \textit{maxdet} problem of the form \eqref{1:eq:minlogdet} \cite{Vander1996}. In the context of robust control analysis and synthesis, the goal is to estimate the largest ellipsoid that can be steered to the origin by bounded controls, in the presence of bounded uncertainty and disturbances
\cite{Boyd1994,HENRION1999}. A related problem, arising in the context of peak minimization,  is to use bounded controls to minimize the size of an ellipsoid known to contain the set of all reachable states from the origin, when the system is driven by a bounded-energy perturbation (see chapter 6 in \cite{Boyd1994}). In these cases, the quantities  of interest --bounds on the worst case estimation error, whether or not given points are inside the region of attraction of the origin or whether the reachable set  intersects an ``unsafe" region-- are related to the geometry of these ellipsoids, rather than merely their volume.  Thus, bounds on the log-determinant of the form \eqref{1:eq:costbound} are poor metrics, since the volume of the ellipsoid corresponding to a feasible solution $X_f$ could be close to the optimal volume, while $X_f$ itself is not close to $X_*$.
 On the other hand,  the metric $||X_*-X_f||_F
^2$, captures the difference between the geometry of the ellipsoids. Hence, for these applications, it  provides a more accurate termination criteria than the usual optimality gap.}
\\
In this work we combine the easily accessible bound $\epsilon$ from \eqref{1:eq:costbound} with the interpretability of the Frobenius norm bounds to provide a bound of the form:
\begin{equation}
        ||X_f-X_*||_F^2 \leq  ||X_f||_2^2 \left(\frac{\sqrt[3]{1-e^{-\epsilon}}}{1-\sqrt[3]{1-e^{-\epsilon}}}\right)^2
        \label{1:eq:finalbound}
\end{equation}
Asymptotically, this bound behaves like $||X_f||_2^2\,\epsilon^{2/3}$ as $\epsilon \to 0$, tightening the bound on the Frobenius norm as the optimality gap \eqref{1:eq:costbound} decreases. The bound is developed by analyzing the properties of the optimizers of \eqref{1:eq:minlogdet} together with the derivation of a novel bound on the principal branch of the Lambert function $W_0(x)$, i.e. the solution of the equation $W_0(x)\,e^{W_0(x)} = x$, for which no closed-form is available. In Section 2 we analyze problem \eqref{1:eq:minlogdet} and derive bounds on the Frobenius norm $ ||X_f-X_*||_F^2$ as a function of the log-determinant bound $\epsilon$ and the Lambert function $W(x)$. In Section 3 we present the novel bound on $W_0(x)$ and in Section 4 we provide the proof for the proposed bound \eqref{1:eq:finalbound}. In Section 5 we test the quality of the bound in a numerical setting and present conclusions and future work in Section 6. 

\section{Bound Derivation}
In this section we derive the bounds on the Frobenius norm presented in \eqref{1:eq:finalbound}. A general form of such bounds is given by:
\begin{equation}
    ||X_*-X_f||_F^2 \leq  ||X_f||_2^2 \,g(\epsilon)
    \label{2:eq:genform}
\end{equation}
We start the derivation by defining the matrix $R = X_*^{-1/2} X_f^{1/2}$. We note that $X_*$ is positive-definite and thus invertible by assumption due to the definition of the set $\mathcal{D}$. Furthermore, assuming finitiness of $\epsilon$ we get that $X_f$ is also strictly positive-definite and thus invertible too. This definition leads to:
\begin{equation}
    \left(X_f-X_*\right) = X_f^{1/2} X_f^{-1/2}\left(X_f-X_*\right)X_f^{-1/2} X_f^{1/2} = X_f^{1/2} \left(I-R^{-1} R^{-T}\right) X_f^{1/2}
\end{equation}
We can use the inequality $||AB||_F^2 \leq ||A||_2^2 \,||B||_F^2$ for symmetric matrices to  bound the Frobenius norm by: 
\begin{equation}
\begin{split}
    ||X_f-X_*||_F^2 =||X_f^{1/2} \left(I-R^{-1} R^{-T}\right) X_f^{1/2}||_F^2 &\leq ||X_f^{1/2}||_2^2 \,|| \left(I-R^{-1} R^{-T}\right) X_f^{1/2}||_F^2\\
    &\leq ||X_f^{1/2}||_2^4\,|| I-R^{-1} R^{-T}||_F^2\\
    &=||X_f||_2^2\,\|I-R^{-1} R^{-T} ||_F^2
\end{split}
\label{2:eq:norm_ineq}
\end{equation}
Combining \eqref{2:eq:genform} and \eqref{2:eq:norm_ineq}, we define $g(\epsilon)$ as the largest squared Frobenius norm $||I-R^{-1} R^{-T} ||_F^2$ that a matrix $R$ can have, given a set of constraints on $R$ which we describe below. For simplicity, we will define these constraints on the symmetric matrix $Q$ instead of $R$, a matrix we define as $Q =  X_*^{-1/2}\left(X_f-X_*\right)X_*^{-1/2}  $ $= R R^T -I$. Denoting by $\sigma_i$ the singular values of $R$, the eigenvalues $\bm{\lambda}$ of $Q$ are given by $\lambda_i = \sigma_i^2 -1$. The eigenvalues of $\left(I-R^{-1} R^{-T}\right)$, whose Frobenius norm we aim to bound, can also be written in terms of $\bm{\lambda}$, as they take the form  $1 - \frac{1}{\sigma_i^2} = \frac{\lambda_i}{1+\lambda_i}$. Next we define the characteristics of the optimization problem that allows us to derive the structure of $g(\epsilon)$ as a function of the eigenvalues $\lambda_i$. 
\subsection{Upper bound $g(\epsilon)$ as an optimization problem }
The structure of the \textit{maxdet} problem provides an additional constraint on the trace of the matrix $Q$, described in the following lemma:
\begin{lemma}
The matrix $Q$ satisfies $\operatorname{Tr}\left(Q\right) \leq  0$.
\end{lemma}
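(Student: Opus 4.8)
The plan is to exploit the first-order optimality of $X_*$ for the convex program \eqref{1:eq:minlogdet}. Since $X_*$ minimizes the convex objective $f(X) = -\log\left(|X|\right)$ over the convex feasible set, the relevant object is the directional derivative of $f$ at $X_*$ along the segment toward the feasible point $X_f$. Concretely, I would introduce the scalar function $\phi(t) = -\log\left(|X_* + t(X_f - X_*)|\right)$ for $t \in [0,1]$ and argue that it attains its minimum over $[0,1]$ at the endpoint $t=0$, so that the one-sided derivative satisfies $\phi'(0) \geq 0$.

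First I would verify that the segment $X(t) = (1-t)X_* + t X_f$ stays feasible, which is where the hypotheses enter: $\mathcal{D}$ is convex and contains both $X_*$ and $X_f$, so $X(t) \in \mathcal{D}$; moreover $X_*$ and $X_f$ are both strictly positive definite (the latter because $\epsilon$ is finite, as already noted), hence $X(t) \succ 0$ on the whole segment and $\phi$ is well defined and smooth there. Because $X_*$ is the global minimizer of $f$ over the feasible set and each $X(t)$ is feasible, $\phi(t) \geq \phi(0)$ for all $t \in [0,1]$, which forces $\phi'(0) \geq 0$.

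Next I would compute $\phi'(0)$ using the standard identity $\frac{d}{dt}\log\left(|A + tB|\right) = \operatorname{Tr}\left((A+tB)^{-1} B\right)$, which gives $\phi'(0) = -\operatorname{Tr}\left(X_*^{-1}(X_f - X_*)\right)$. Combining with $\phi'(0) \geq 0$ yields $\operatorname{Tr}\left(X_*^{-1}(X_f - X_*)\right) \leq 0$. Finally, the cyclic property of the trace gives $\operatorname{Tr}(Q) = \operatorname{Tr}\left(X_*^{-1/2}(X_f - X_*)X_*^{-1/2}\right) = \operatorname{Tr}\left(X_*^{-1}(X_f - X_*)\right) \leq 0$, which is the claim.

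I expect the only delicate point to be the justification that $X_*$ truly satisfies this variational inequality against every feasible $X_f$, that is, that the perturbation $X_f - X_*$ is an admissible direction into $\mathcal{D} \cap \{X \succeq 0\}$. This is precisely guaranteed by the convexity of $\mathcal{D}$ together with the strict positive definiteness of both endpoints; once feasibility of the segment is secured, the derivative computation and the trace manipulation are routine.
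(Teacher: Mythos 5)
Your proof is correct and follows essentially the same route as the paper: both restrict the objective to the segment from $X_*$ to $X_f$, use first-order optimality at $t=0$, and identify the directional derivative with $-\operatorname{Tr}(Q)$ (the paper rewrites the restricted objective in terms of $Q$ before differentiating, while you differentiate first and then apply the cyclic property of the trace, which is an immaterial difference). Your extra care about feasibility of the segment is a welcome but minor elaboration of what the paper leaves implicit.
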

\begin{proof}
Consider the function $F(t) = -\log\left(|X_*- t\left( X_*-X_f\right)|\right)$ and its minimization on the interval $0\leq t\leq 1$. This problem is equivalent to \eqref{1:eq:minlogdet} restricted to the line segment between $X_*$ and $X_f$ and so the optimum must lie at $t=0$, from which follows that its gradient at $t=0$ must be non-negative, i.e. $F'(0) \geq 0$. Expressing $F(t)$ with respect to $Q$ we have that:
\begin{equation}
    F(t) =  -\log\left(|I+ tQ|\right) - \log\left(|X_*|\right) \quad \implies\quad F'(t) = -\text{Tr}((I+tQ)^{-1}Q)
\end{equation}
 Evaluating the negative gradient at $t=0$ leads to $-F'(0) = \operatorname{Tr}\left(Q\right) \leq0$ which completes the proof.  
\end{proof}
Noting that the trace of a symmetric matrix is equivalent to the sum of its eigenvalues, the non-positivity of the trace of $Q$ allows us to derive a bound for the eigenvalues $\lambda_i$ of the form:
\begin{equation}
    \begin{split}
        \epsilon \geq \log\left(|X_*|\right) -\log\left(|X_f|\right) = -\log\left(|I+Q|\right)
        &\geq \operatorname{Tr}\left(Q\right) -\log\left(|I+Q|\right)\\ &= \sum_i^N \lambda_i - \log(1+\lambda_i)
    \end{split}
\end{equation}
We can pose the bound $g(\epsilon)$ on the Frobenius norm $|| I-R^{-1} R^{-T} ||_F^2 = \sum_i \left(\frac{\lambda_i}{1+\lambda_i}\right)^2$ as an optimization problem, where we aim to find the maximum Frobenius norm $|| I-R^{-1} R^{-T} ||_F^2$ given the eigenvalue constraints presented above:
\begin{equation}
\begin{split}
       g\left(\epsilon\right) = \operatorname*{max}_{\lambda_i}\; \sum_i \left(\frac{\lambda_i}{1+\lambda_i}\right)^2 \; \text{s.t.}\;\sum_i^N \lambda_i - \log(1+\lambda_i)\leq \epsilon,\;
       \sum_i^N \lambda_i \leq 0,\; \lambda_i > -1\;\forall i
\end{split}
    \label{2:eq:g_eps}
\end{equation}
We can decouple the variables $\lambda_i$ on the logarithmic constraint by including auxiliary variables $\tau_i$:
\begin{equation}
    \sum_i^N \lambda_i - \log(1+\lambda_i)\leq \epsilon \quad \iff \quad \lambda_i - \log(1+\lambda_i) = \tau_i,\quad \sum_i^N \tau_i\leq \epsilon
\end{equation}
The variables $\lambda_i$ can be isolated in the following way:
\begin{equation}
    \begin{split}
        \lambda_i - \log(1+\lambda_i)&=\tau_i\\
        -1 -\lambda_i + \log(1+\lambda_i)&=-1-\tau_i\\
        e^{-1-\lambda_i } (-1-\lambda_i) &=-e^{-\left(1+\tau_i\right)} \\
         \lambda_i &= - W(-e^{-\left(1+\tau_i\right)}) -1
    \end{split}
    \label{2:eq:lambdatau}
\end{equation}
where in the last step we have used the the product logarithm function $W(x)$ satisfying $W(x)e^{W(x)}=x$. In the next section we explore the properties of this function and present two theorems that will allow us to characterize the solution of the optimization problem \eqref{2:eq:g_eps}.

\section{Bounds on the Lambert function}\label{3:sec}
The Lambert function $W(x)$, also known as the product logarithm function, is the function defined by:
\begin{equation}
    W(x) e^{W(x)} = x
    \label{3:eq:Lambert}
\end{equation}
and when defined over the reals, its domain is $x\geq-e^{-1}$ \cite{Corless1996}. In the range $-e^{-1}\leq x < 0$, $W(x)$ can take two different values, defined by the two branches of the function $W_0(x)$ and $W_{-1}(x)$, shown in Figure \ref{fig:Fig1}.a. The former, also known as the principal branch, satisfies $W_0(x)\geq-1$, while the latter satisfies $W_{-1}(x)\leq -1$, meeting at the branching point $x= -e^{-1}$ where $W_{0}(-e^{-1}) = W_{-1}(-e^{-1}) = -1$. The range of $W(x)$ for $x\geq 0$ belongs to the principal branch.\\
\\
The Lambert function has wide application in many science and engineering disciplines, including physics and astrophysics \cite{Packel2004,Stewart2009,Cranmer2004,Warburton2004}, control theory for the stability analysis of time delayed systems \cite{Yi2010,Yi2010a,Yi2008,Yi2006,Chen2002,Shinozaki2006}, electronics \cite{Banwell2000} and biochemistry \cite{Golicnik2012}, to name a few. While there is no analytic closed-form for $W(x)$, accurate approximations have been developed, some of which presenting remarkable low relative error ($\leq 10^{-16}$)\cite{Boyd1998,Barry1995,Barry2000} through the used of complex formulas and/or iterative schemes. Simpler and more interpretable bounds were later developed for $W_{-1}$ in \cite{Chatz2013,Alzahrani2018}, and for the positive portion of $W_0$ \cite{Hoorfar2008}. Next we develop similar bounds for the negative domain of $W_0$, as seen in Figure \ref{fig:Fig1}.b, a region of the Lambert function for which, to the best of our knowledge, no simple bounds have been previously proposed.\\
\begin{figure}
    \centering
    \includegraphics[width=8cm]{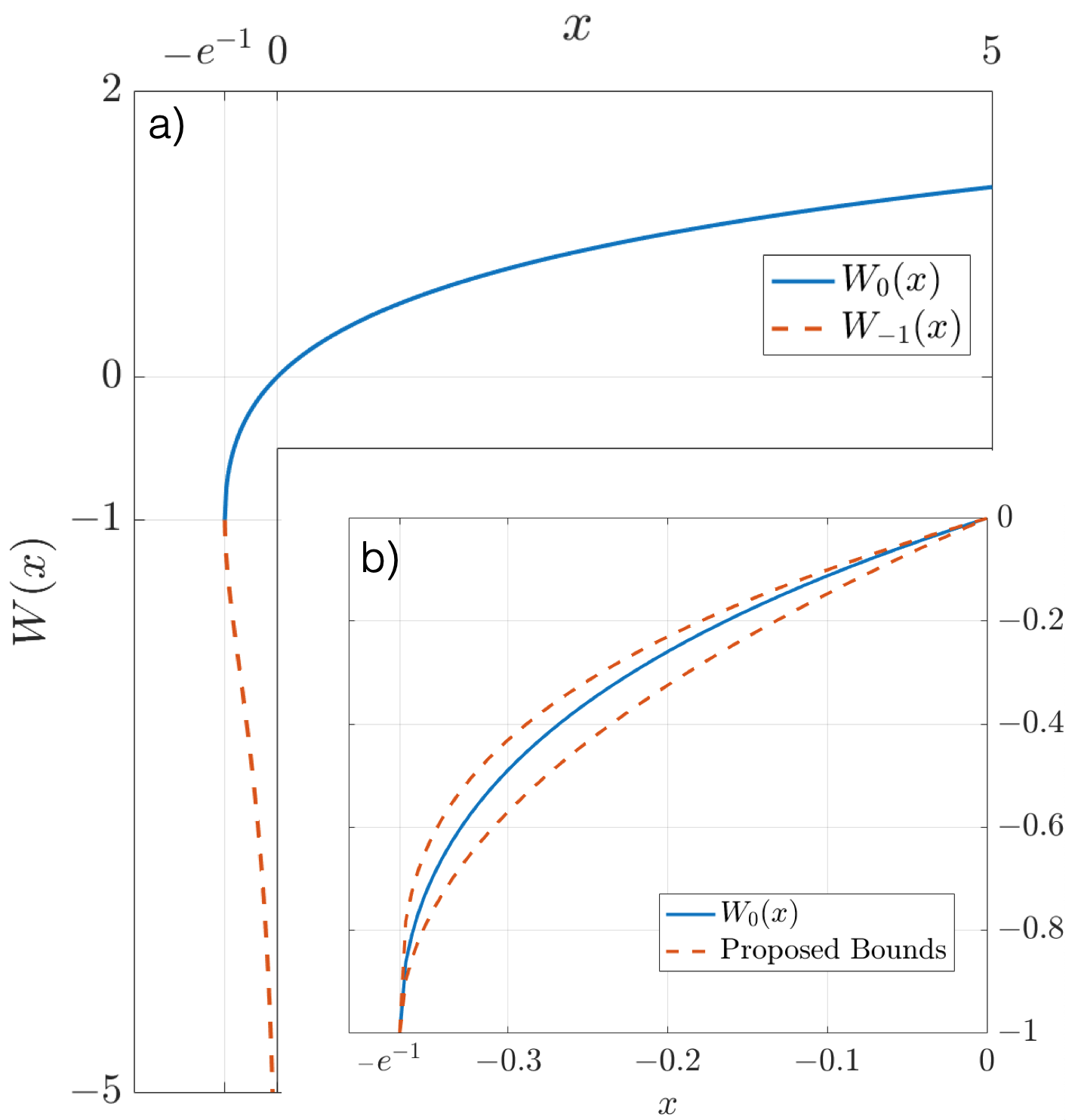}
    \caption{a) The two branches of the Lambert function over the real values: $W_0(x)$ in blue solid line and $W_{-1}(x)$ in red dashed line. b) Detail on the $W_0(x)$ branch over the negative range, in blue solid line, and the upper and lower bounds proposed in this work, in red dashed line.}
    \label{fig:Fig1}
\end{figure}
\\
We begin the derivation by presenting the following bounds.
\begin{lemma}
The function $x-\log(1+x)$ is bounded as follows for $-1<x\leq 0$:
\begin{equation}
    -\log\left(1+x^3\right) \leq x - \log\left(1+x\right) \leq -\log\left( 1-x^2\right)
    \label{3:eq:logbound}
\end{equation}
\end{lemma}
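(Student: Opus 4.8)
The plan is to prove each of the two inequalities separately by reducing them, via algebraic factorization of the logarithm arguments, to elementary scalar inequalities that can be settled by a single derivative computation. The starting observation is that at $x=0$ all three expressions vanish, so each bound should be provable by showing that the corresponding gap function is monotone on $(-1,0]$ with the correct sign, anchored at the common value $0$ at the right endpoint.

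For the upper bound, I would first factor $1-x^2 = (1-x)(1+x)$, so that $-\log(1-x^2) = -\log(1-x) - \log(1+x)$. Substituting this into the claimed inequality $x - \log(1+x) \le -\log(1-x^2)$ cancels the $\log(1+x)$ terms on both sides and leaves the equivalent statement $x \le -\log(1-x)$, i.e. $x + \log(1-x) \le 0$. This is just the standard bound $\log(1+u)\le u$ applied with $u=-x\ge0$; alternatively, setting $g(x)=x+\log(1-x)$ one checks $g(0)=0$ and $g'(x)=-x/(1-x)>0$ on $(-1,0)$, so $g$ is increasing and therefore $g(x)\le g(0)=0$ on the interval.

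For the lower bound, the analogous step uses the cubic factorization $1+x^3 = (1+x)(1-x+x^2)$, which gives $\log(1+x^3) - \log(1+x) = \log(1-x+x^2)$ since $1+x\neq 0$ on the domain. The inequality $x - \log(1+x) \ge -\log(1+x^3)$ then reduces to $x + \log(1-x+x^2) \ge 0$. Defining $h(x)=x+\log(1-x+x^2)$, I would compute
\begin{equation}
    h'(x) = 1 + \frac{2x-1}{1-x+x^2} = \frac{x(1+x)}{1-x+x^2},
\end{equation}
and note that the quadratic $1-x+x^2$ has negative discriminant and is therefore strictly positive for all real $x$, while $x(1+x)<0$ on $(-1,0)$. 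Hence $h'(x)<0$ there, so $h$ is decreasing, and with $h(0)=0$ this yields $h(x)\ge 0$ on $(-1,0]$, which is exactly the lower bound.

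The step I expect to be the crux is recognizing the two factorizations $1-x^2=(1-x)(1+x)$ and $1+x^3=(1+x)(1-x+x^2)$: they are precisely what cancel the awkward $\log(1+x)$ term in the middle expression and collapse each two-sided log inequality into a single clean scalar inequality. Once that cancellation is made, the remaining work is a routine sign analysis of $g'$ and $h'$, the only mild care being the verification that $1-x+x^2>0$ everywhere so that $h'$ never changes sign within the interval.
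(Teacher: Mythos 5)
Your proof is correct and follows essentially the same route as the paper: both arguments reduce each inequality to showing a gap function vanishing at $x=0$ is monotone with the appropriate sign on $(-1,0]$, and indeed your $g$ and $h$ are exactly the paper's $u$ and $l$ after the logarithms are combined (e.g., $x-\log(1+x)+\log(1+x^3)=x+\log(1-x+x^2)$), with identical derivatives $\tfrac{-x}{1-x}$ and $\tfrac{x(1+x)}{1-x+x^2}$. The up-front factorizations are a pleasant cosmetic simplification but not a different method.
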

\begin{proof}
For the left hand side, we introduce the function $l(x) =   x - \log\left(1+x\right)+\log\left(1+x^3\right)$. Non-negativity of $l(x)$ in the range $-1<x\leq 0$ is equivalent to satisfaction of  the lower bound in \eqref{3:eq:logbound}. The derivative of $l(x)$ is given by:
\begin{equation}
     l^\prime (x) = 1 - \frac{1}{1+x}+\frac{3x^2}{1+x^3} = \frac{x\left(1+x\right)^3}{(1+x)(1+x^3)}
\end{equation}
which is negative for $-1<x<0$. As $l(0) = 0$, $l(x)$ is non-negative in the proposed range, which completes the proof. Similarly, for the right hand side bound of \eqref{3:eq:logbound}, we have that if  $u(x) =  x - \log\left(1+x\right)+\log\left(1-x^2\right)$ is non-positive in $-1<x<0$ then the bound holds. Its derivative is given by:
\begin{equation}
    u^\prime (x) = 1 - \frac{1}{1+x}-\frac{2x}{1-x^2} = \frac{-x}{(1-x)}   
\end{equation}
which is positive in $-1<x<0$ and thus $u(x)$ is increasing in that range. Combining this with $u(0) = 0$ certifies that $u(x)$ is non-positive in the desired range and finishes the proof.
\end{proof}
We are now ready to present the proposed bound on $W_0$:
\begin{theorem}
In the range $-e^{-1}\leq x < 0$ the function $W_0(x)$ is bounded as follows:
\begin{equation}
    \sqrt{e\,x+1}-1 \leq W_0(x) \leq  \sqrt[3]{e\,x+1}-1
    \label{3:eq:boundlamb}
\end{equation}
\label{3:thm:bounds}
\end{theorem}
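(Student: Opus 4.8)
The plan is to reduce the claimed bounds directly to the logarithmic inequality \eqref{3:eq:logbound} established in the preceding lemma through a single change of variables. First I would set $w = W_0(x)$ and record that, over the interval $-e^{-1}\le x<0$, the principal branch satisfies $-1\le w<0$, with $w=-1$ at $x=-e^{-1}$ and $w\to 0^-$ as $x\to 0^-$. Then I would introduce the auxiliary variable $\lambda = -(1+w)$, which maps this range bijectively onto $-1<\lambda\le 0$, precisely the domain on which the preceding lemma applies.

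The key step is to rewrite the defining relation $w\,e^{w}=x$ in terms of $\lambda$. Since $1+\lambda=-w>0$, one has $\log(1+\lambda)=\log(-w)$, while taking the logarithm of $-x=(-w)e^{w}>0$ gives $\log(-x)=\log(-w)+w$. Substituting, I expect to obtain
\begin{equation*}
    \lambda-\log(1+\lambda) = -(1+w)-\log(-w) = -1-\log(-x) = -\log(-e\,x).
\end{equation*}
This identity is the heart of the argument: it expresses the quantity controlled by the lemma entirely in terms of $x$.

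From here the conclusion follows mechanically. Feeding $\lambda-\log(1+\lambda)=-\log(-ex)$ into the two-sided bound $-\log(1+\lambda^3)\le \lambda-\log(1+\lambda)\le -\log(1-\lambda^2)$ and using monotonicity of the logarithm, I would exponentiate to get $1-\lambda^2\le -ex\le 1+\lambda^3$. Writing $s=1+w=-\lambda\in[0,1)$ so that $\lambda^2=s^2$ and $\lambda^3=-s^3$, these become $s^3\le ex+1\le s^2$; taking (monotone, nonnegative) square and cube roots yields $\sqrt{ex+1}\le s\le \sqrt[3]{ex+1}$, and subtracting $1$ recovers exactly $\sqrt{ex+1}-1\le W_0(x)\le\sqrt[3]{ex+1}-1$. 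As a consistency check, both sides collapse to $-1$ at $x=-e^{-1}$ (where $ex+1=0$) and to $0$ as $x\to0^-$ (where $ex+1\to1$), matching the known values of $W_0$ and confirming the bounds are tight at the endpoints.

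The main obstacle is really the bookkeeping rather than any deep difficulty: once the correct change of variables is identified, the estimate reduces to the preceding lemma. Care is needed because $x$ and $w$ are both negative, so the manipulations $\log(-x)=\log(-w)+w$ and $\log(1+\lambda)=\log(-w)$ must track the correct absolute values; one must also confirm that $\lambda=-(1+w)$ lands in the half-open interval $(-1,0]$ where the lemma is valid, and in particular that the endpoint $x=-e^{-1}$ maps to $\lambda=0$, which the lemma covers directly. Finally, each inversion—exponentiation, then square and cube roots—must be applied to nonnegative, correctly ordered quantities so that the inequality directions are preserved.
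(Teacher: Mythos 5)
Your proposal is correct and follows essentially the same route as the paper: the substitution $\lambda=-(1+W_0(x))$ is exactly the paper's $\tau=-1-W_0(x)$, the identity $\lambda-\log(1+\lambda)=-1-\log(-x)$ is the same reduction to the preceding lemma, and the exponentiation and root-taking steps match. The only cosmetic difference is your extra variable $s=-\lambda$ and the explicit endpoint checks, which are fine.
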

\begin{proof}
Applying the change of variables $W_0(x) = -1-\tau$ to equation \eqref{3:eq:Lambert} and inverting signs we get:
\begin{equation}
    \left(1+\tau\right) e^{-1-\tau} = -x
\end{equation}
Applying the logarithm to both sides and changing signs again results in
\begin{equation}
    \tau - \log\left(1+\tau\right) = -\log(-x) -1
\end{equation}
As $W_0(x)$ satisfies $-1\leq W_0(x)\leq 0$ in the interval $-e^{-1}\leq x < 0$, we have that $-1<\tau \leq0$ and thus we can use the bounds from \eqref{3:eq:logbound} to get:
\begin{equation}
\begin{split}
    -\log\left(1+\tau^3\right) &\leq -\log(-x) -1 \leq -\log\left( 1-\tau^2\right)\\
    \tau^3 &\geq -e\,x -1 \geq -\tau^2\\
    \left(W_0(x)+1\right)^3 &\leq e\,x + 1 \leq \left(W_0(x)+1\right)^2\\
\end{split}
\end{equation}
Applying cubic and square roots, respectively, and subtracting 1 leads to the inequalities in \eqref{3:eq:boundlamb}.
\end{proof}
In settings like \eqref{2:eq:lambdatau}, the  Lambert function is expressed in the form:
\begin{equation}
   y - \log\left(1+y\right) = u
    \label{3:eq:lambertlog}
\end{equation}
which is equivalent to \eqref{3:eq:Lambert} for $x = -e^{-1-u}$ and $y = -1 - W(x)$. In that case the bounds \eqref{3:eq:boundlamb} reduce to the form:
\begin{equation}
    \sqrt{1-e^{-u}}-1 \leq W_0(-e^{-1-u}) \leq  \sqrt[3]{1-e^{-u}}-1
    \label{3:eq:boundlamb2}
\end{equation}
We introduce a final inequality relating the values of both branches $W_0(x)$ and $W_{-1}(x)$ for negative values of $x$:
\begin{theorem}
For any $x \in [-e^{-1},0)$, the following inequality holds:
\begin{equation}
    \frac{W_0(x) + 1}{-W_0(x)} \geq \frac{W_{-1}(x) + 1}{W_{-1}(x)}
    \label{3:eq:frac_ineq}
\end{equation}
\label{3:thm:ineq}
\end{theorem}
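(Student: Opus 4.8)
The plan is to recast the two-branch inequality as a single statement about the slopes of a convex function and then close it with a one-dimensional monotonicity argument. First I would reuse the substitution introduced around \eqref{3:eq:lambertlog}: setting $y_0 = -1 - W_0(x)$ and $y_{-1} = -1 - W_{-1}(x)$, both quantities solve $h(y) = u$ where $h(y) := y - \log(1+y)$ and $u := -\log(-x) - 1 \ge 0$, while the branch constraints force $y_0 \in (-1,0]$ and $y_{-1} \in [0,\infty)$. The key observation is that $h'(y) = \frac{y}{1+y}$, so the two sides of \eqref{3:eq:frac_ineq} are exactly $\frac{W_0(x)+1}{-W_0(x)} = \frac{-y_0}{1+y_0} = -h'(y_0)$ and $\frac{W_{-1}(x)+1}{W_{-1}(x)} = \frac{y_{-1}}{1+y_{-1}} = h'(y_{-1})$. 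Hence the theorem becomes $h'(y_0) + h'(y_{-1}) \le 0$, a statement comparing the slopes of $h$ at its two level points.

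Since $h''(y) = (1+y)^{-2} > 0$, the function $h$ is strictly convex on $(-1,\infty)$ with minimum at $y = 0$; this both confirms the branch identification above and reduces the matter to a steepness comparison. Writing $z_0 = -y_0 \in [0,1)$, I would first establish the auxiliary fact that, at equal height, the principal-branch root sits closer to the origin, i.e. $z_0 \le y_{-1}$. The natural route is to compare $h(-z)$ with $h(z)$ through $p(z) := h(-z) - h(z) = -2z + \log\frac{1+z}{1-z}$; one checks $p(0) = 0$ and $p'(z) = \frac{2z^2}{1-z^2} \ge 0$, so $h(-z_0) \ge h(z_0)$. Combined with $h(y_{-1}) = h(-z_0)$ and the monotonicity of $h$ on $[0,\infty)$, this yields $y_{-1} \ge z_0$.

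To finish I would view everything as a function of the level $u$ and show $G(u) := h'(y_0(u)) + h'(y_{-1}(u))$ is non-positive. At $u = 0$ both roots vanish, giving $G(0) = 0$. Differentiating $h(y_i(u)) = u$ gives $y_i'(u) = 1/h'(y_i)$, whence $G'(u) = \frac{1}{y_0(1+y_0)} + \frac{1}{y_{-1}(1+y_{-1})} = -\frac{1}{z_0(1-z_0)} + \frac{1}{y_{-1}(1+y_{-1})}$. Because $0 < z_0 \le y_{-1}$ and $0 < 1 - z_0 < 1 + y_{-1}$, the products obey $z_0(1-z_0) \le y_{-1}(1+y_{-1})$, so $G'(u) \le 0$ and therefore $G(u) \le G(0) = 0$ for all $u \ge 0$, which is precisely \eqref{3:eq:frac_ineq}; equality holds only at $u = 0$, i.e. $x = -e^{-1}$.

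The main obstacle, and the only genuinely non-mechanical step, is the steepness comparison $z_0 \le y_{-1}$: once the problem is recast through $h'$, the remainder is routine convexity and sign bookkeeping. An alternative that avoids differentiating $G$ is to clear denominators in \eqref{3:eq:frac_ineq} and prove the equivalent polynomial inequality $y_{-1} - z_0 \le 2\,z_0\,y_{-1}$ directly, but this appears to invoke the level-set constraint $h(-z_0) = h(y_{-1})$ in a less transparent fashion, so I would prefer the monotonicity route above.
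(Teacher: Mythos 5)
Your proof is correct, and it takes a genuinely different route from the paper's. Both arguments begin with the same substitution (the paper's $\sigma=-1-W_0(x)$, $\tau=-1-W_{-1}(x)$ are your $y_0$, $y_{-1}$), placing the two roots on a common level set of $h(y)=y-\log(1+y)$. From there the paper reduces \eqref{3:eq:frac_ineq} to the explicit pointwise bound $-\sigma\geq\tau/(1+2\tau)$ --- exactly the ``cleared-denominator'' alternative you sketch at the end --- and proves it by introducing the comparison point $\nu(\tau)=-\tau/(1+2\tau)$, checking $h(\nu(\tau))\leq h(\tau)$ by a one-variable derivative computation, and invoking monotonicity of $h$ on $(-1,0]$. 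You instead observe that the two sides of \eqref{3:eq:frac_ineq} are $-h'(y_0)$ and $h'(y_{-1})$, turning the claim into the slope comparison $h'(y_0)+h'(y_{-1})\leq 0$, which you propagate from the branch point by differentiating in the level $u$; this requires the auxiliary reflection lemma $-y_0\leq y_{-1}$, itself a clean asymmetry fact about $h$. Your route is conceptually more illuminating --- it explains the inequality as ``the left branch of $h$ is steeper than the right at equal height'' and isolates the equality case $x=-e^{-1}$ --- at the cost of an extra lemma and an implicit-differentiation step; the paper's is shorter and fully self-contained, but its choice of $\nu(\tau)$ reads as unmotivated. Two small points to tighten in a final write-up: state that $y_0(u),y_{-1}(u)$ are differentiable for $u>0$ because $h'(y_i)\neq 0$ there, and note that since $G'$ is singular at $u=0$ (both roots vanish), the conclusion $G(u)\leq G(0)=0$ should be drawn via the mean value theorem on $[0,u]$ together with continuity of $G$ at $0^+$, rather than by integrating from $0$.
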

\begin{proof}
We start  with the change of variables $W_0(x) = -1-\sigma$ and $W_{-1} = -1 -\tau$, with $\sigma \in (-1,0]$ and $\tau \in [0,\infty)$ by construction. By definition of the Lambert function we have that:
\begin{equation}
    \sigma - \log(1+\sigma) = \tau - \log(1+\tau) = -1-\log(-x)
\end{equation}
Inequality \eqref{3:eq:frac_ineq} can also be written with respect to $\sigma$ and $\tau$, leading to:
\begin{equation}
        \frac{-\sigma}{1+\sigma} \geq \frac{\tau}{1+\tau} \quad \iff \quad
     -1  - \frac{1}{\sigma}  \leq  \frac{1}{\tau}+1 \quad \iff \quad
     -\sigma \geq \frac{\tau}{1+ 2\tau} 
    \label{3:eq:ineq_sigtau}
\end{equation}
Now define $\nu\left(\tau\right) = \frac{-\tau}{1+ 2\tau} $ and $f(\tau) = \nu\left(\tau\right) - \tau -\log\left(1+\nu\left(\tau\right) \right) + \log\left(1+\tau\right)$. We have that $f(\tau) \left |_{\tau =0} = 0 \right .$ and its derivative its given by:
\begin{equation}
    \frac{-4\tau^2}{\left(1+2\tau\right)^2} \leq 0
\end{equation}
Being non-increasing and crossing 0 at $\tau=0$, we can conclude that $f(\tau)\leq 0$ for $\tau\geq 0$. This establishes that:
\begin{equation}
    \begin{split}
        \nu\left(\tau\right) -\log\left(1+\nu\left(\tau\right) \right) &\leq \tau - \log\left(1+\tau\right)=  \sigma - \log(1+\sigma)
    \end{split}
\end{equation}
The function $y - \log(1+y)$ is decreasing for negative $y$, which leads to $\sigma \leq \nu\left(\tau\right)$ and:
\begin{equation}
    -\sigma \geq -\nu\left(\tau\right) = \frac{\tau}{1+ 2\tau} 
\end{equation}
Due to this result, the chain of inequalities in \eqref{3:eq:ineq_sigtau} holds, and thus \eqref{3:eq:frac_ineq} also holds, finishing the proof.
\end{proof}
\section{Final Bound}
At this point we return to the optimization problem \eqref{2:eq:g_eps}. Combining the structure of its constraints and Theorem \ref{3:thm:ineq} we show that its optimizers are non-positive:

\begin{lemma}
The optimizers $\lambda^*_i$ in problem \eqref{2:eq:g_eps} are non-positive.
\label{31:lemma:lambdbnd}
\end{lemma}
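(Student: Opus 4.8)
The plan is to argue by a feasibility-preserving swap: starting from any optimizer of \eqref{2:eq:g_eps}, I will show that replacing any strictly positive component by a suitably chosen non-positive one keeps all three constraints satisfied while not decreasing the objective, so that an optimizer with all $\lambda_i \le 0$ can always be produced. The engine of the swap is Theorem \ref{3:thm:ineq}, which compares the two values of $\lambda$ sharing a common value of the logarithmic constraint function.

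First I would record the shape of the constraint function $c(\lambda) = \lambda - \log(1+\lambda)$ on $\lambda > -1$: since $c'(\lambda) = \frac{\lambda}{1+\lambda}$ and $c''(\lambda) = \frac{1}{(1+\lambda)^2} > 0$, it is strictly convex with $c(0)=0$, strictly decreasing on $(-1,0)$, strictly increasing on $(0,\infty)$, and $c(\lambda)\to\infty$ at both ends. Consequently, for a positive value $\tau_k := \lambda_k^* > 0$ the level set $\{\lambda : c(\lambda) = c(\tau_k)\}$ contains exactly one other point $\sigma_k \in (-1,0)$. I would then identify this pair with the two Lambert branches through \eqref{3:eq:lambertlog} and \eqref{2:eq:lambdatau}: setting $x = -e^{-1 - c(\tau_k)} \in [-e^{-1},0)$, the two solutions are $-1 - W(x)$, so that the positive one is $\tau_k = -1 - W_{-1}(x)$ and the negative one is $\sigma_k = -1 - W_0(x)$. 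This is exactly the change of variables used in Theorem \ref{3:thm:ineq}.

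With this identification, inequality \eqref{3:eq:frac_ineq} rewritten in the $\sigma,\tau$ variables as $\frac{-\sigma_k}{1+\sigma_k} \ge \frac{\tau_k}{1+\tau_k}$ states precisely that the negative counterpart has a larger-in-magnitude value of $\frac{\lambda}{1+\lambda}$; squaring gives $\bigl(\frac{\sigma_k}{1+\sigma_k}\bigr)^2 \ge \bigl(\frac{\tau_k}{1+\tau_k}\bigr)^2$, so the objective contribution of index $k$ does not decrease under the replacement $\lambda_k^* \mapsto \sigma_k$. I would then verify feasibility of the swapped point: the logarithmic constraint is untouched because $c(\sigma_k) = c(\tau_k)$; the trace constraint $\sum_i \lambda_i \le 0$ only improves because $\sigma_k < 0 < \tau_k$; and $\sigma_k > -1$ by construction. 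Iterating over every positive component yields a feasible point with no positive entries whose objective is at least as large, which proves the claim. Moreover, since the derivative computed in the proof of Theorem \ref{3:thm:ineq} is strictly negative for $\tau > 0$, that inequality is strict whenever $\tau_k > 0$, so the objective strictly increases under the swap and no optimizer can in fact carry a strictly positive component.

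The step I expect to be most delicate is the clean identification of the negative/positive level-set pair with the $W_0$/$W_{-1}$ branches, so that Theorem \ref{3:thm:ineq} applies verbatim to the objective contributions; once that correspondence is pinned down, checking the three constraints and the monotonicity of the objective under the swap is routine.
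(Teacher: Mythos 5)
Your proposal is correct and follows essentially the same route as the paper: both identify the positive and negative solutions of $\lambda - \log(1+\lambda) = \tau$ with the $W_{-1}$ and $W_0$ branches respectively, invoke Theorem \ref{3:thm:ineq} to show the objective contribution does not decrease when swapping a positive component to its non-positive counterpart, and verify that the swap preserves the logarithmic, trace, and $\lambda_i > -1$ constraints. The only cosmetic difference is that the paper phrases the swap as a contradiction argument while you iterate it constructively (and add a strictness observation), but the substance is identical.
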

\begin{proof}
We start the proof with problem \eqref{2:eq:g_eps} and introduce the  variables $\tau_i$ and the parameterization of $\lambda_i$ using the Lambert function presented in \eqref{2:eq:lambdatau}:
\begin{equation}
\begin{split}
       g\left(\epsilon\right) = \operatorname*{max}_{\lambda_i,\tau_i}\; \sum_i \left(\frac{\lambda_i}{1+\lambda_i}\right)^2 \quad \text{s.t.}\;&\;\lambda_i = - W(-e^{-\left(1+\tau_i\right)}) -1,\;
      \\
       & \lambda_i > -1,\;\sum_i^N \lambda_i \leq 0, \; \sum_i^N \tau_i \leq \epsilon
\end{split}
    \label{3:eq:g_eps}
\end{equation}
Given $\tau_i$, $\lambda_i$ can take only one of two values, given by the two branches of the Lambert function, i.e. either $\lambda_i = - W_0(-e^{-\left(1+\tau_i\right)}) -1$ or $\lambda_i = - W_{-1}(-e^{-\left(1+\tau_i\right)}) -1$. Due to the range of each branch of the Lambert function, the former implies that $\lambda_i\leq0$ while the latter implies $\lambda_i\geq0$. Assume by contradiction that the optimal $\lambda_i$ is non-negative and thus $\lambda_i = - W_{-1}(-e^{-\left(1+\tau_i\right)}) -1$. Then switching $\lambda_i$ to the principal branch $\lambda_i \to \hat{\lambda}_i = - W_0(-e^{-\left(1+\tau_i\right)}) -1$ would result in an equal or larger objective, since from Theorem \ref{3:thm:ineq} we have that:
\begin{equation}
    \left(\frac{\hat{\lambda}_i}{1+\hat{\lambda}_i}\right)^2=\left(\frac{W_0(x) + 1}{W_0(x)}\right)^2 \geq  \left(\frac{W_{-1}(x) + 1}{W_{-1}(x)}\right)^2 = \left(\frac{\lambda_i}{1+\lambda_i}\right)^2
\end{equation}
We check that $\hat{\lambda}_i$ is also a feasible solution to \eqref{3:eq:g_eps} by analyzing the two constraints affecting $\lambda_i$. Regarding the constraint $\sum_i^N \lambda_i \leq 0$, the fact that $\lambda_i \geq \hat{\lambda}_i$ means that the sum over all $\lambda$ cannot increase by moving $\lambda_i$ to the principal branch and thus the constraint will be preserved. For the second constraint $\lambda_i > -1$, we note that $\tau_i$ is non-negative by construction due to the range of the function $\tau_i= \lambda_i - \log(1+\lambda_i)$, and also bounded above due to the last constraint $\sum_i^N \tau_i \leq \epsilon$. As a result the argument of the Lambert function is strictly negative, i.e. $-e^{-\left(1+\tau_i\right)}<0$, which results in $\hat{\lambda}_i = - W_0(-e^{-\left(1+\tau_i\right)}) -1 > -1$. 
\end{proof}
From the last lemma we can reduce problem \eqref{2:eq:g_eps} to the following form:
\begin{equation}
\begin{split}
       g\left(\epsilon\right) = \operatorname*{max}_{\lambda_i}\; \sum_i \left(\frac{\lambda_i}{1+\lambda_i}\right)^2 \quad \text{s.t.}\;\sum_i^N \lambda_i - \log(1+\lambda_i)\leq \epsilon,\;
      0\geq\lambda_i \;\forall i
\end{split}
    \label{3:eq:g_eps2}
\end{equation}
where we have used the non-positivity of the optimizer to transform the trace constraint into an element-wise one, and the finitiness of $\epsilon$ to remove the lower bound on each $\lambda_i$. From this new formulation, the global optimizer of \eqref{3:eq:g_eps2} can be found in closed form, as shown in the next Lemma:
\begin{lemma}
The optimizer $ \bm{\lambda}^*$ to problem \ref{3:eq:g_eps2} is of the form
\begin{equation}
    \bm{\lambda}^* = \{- W_0(-e^{-\left(1+\epsilon\right)}) -1,0,0\dots,0\}
\end{equation}
\label{3:lemma:optilamb}
\end{lemma}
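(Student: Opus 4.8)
The plan is to reparametrize the problem so that each coordinate contributes an independent ``cost'' to the single budget constraint, and then to show that the objective, viewed as a function of that cost, is convex; convexity together with vanishing at the origin forces the optimal allocation to pile the entire budget onto a single coordinate. Concretely, for $\lambda \in (-1,0]$ define the cost $\psi(\lambda) = \lambda - \log(1+\lambda)$ and the per-coordinate objective $\phi(\lambda) = \left(\lambda/(1+\lambda)\right)^2$. Since $\psi'(\lambda) = \lambda/(1+\lambda) < 0$ on $(-1,0)$, with $\psi(0)=0$ and $\psi(\lambda)\to\infty$ as $\lambda\to -1^+$, the map $\psi$ is a decreasing bijection from $(-1,0]$ onto $[0,\infty)$. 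Writing $t_i = \psi(\lambda_i)$, the constraint $\sum_i \psi(\lambda_i)\le\epsilon$ in \eqref{3:eq:g_eps2} becomes $\sum_i t_i \le \epsilon$ with $t_i \ge 0$, and the objective becomes $\sum_i h(t_i)$ where $h = \phi\circ\psi^{-1}$.

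First I would establish that $h$ is strictly increasing and convex. Using the chain rule with $\phi'(\lambda) = 2\lambda/(1+\lambda)^3$ and $\psi'(\lambda) = \lambda/(1+\lambda)$ gives
\begin{equation}
  h'(t) = \frac{\phi'(\lambda)}{\psi'(\lambda)} = \frac{2}{(1+\lambda)^2}, \qquad \lambda = \psi^{-1}(t).
\end{equation}
As $t$ increases, $\lambda = \psi^{-1}(t)$ decreases toward $-1$, so $1+\lambda \in (0,1]$ decreases and $h'(t) = 2/(1+\lambda)^2$ is positive and increasing in $t$. Hence $h$ is strictly increasing and convex on $[0,\infty)$, with $h(0) = \phi(0) = 0$.

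The remaining step is a superadditivity argument. For any convex $h$ with $h(0)=0$ and any $a,b\ge 0$, bounding each of $h(a)$ and $h(b)$ by the appropriate convex combination of $h(a+b)$ and $h(0)$ yields $h(a)+h(b) \le h(a+b)$. Applying this repeatedly to any feasible $(t_1,\dots,t_N)$, I would merge all coordinates into a single one carrying cost $\sum_i t_i \le \epsilon$ without decreasing the objective; monotonicity of $h$ then permits raising this cost to the full budget $\epsilon$, again without decreasing the objective. The maximizer therefore has a single nonzero coordinate with $\psi(\lambda) = \epsilon$, i.e. $\lambda - \log(1+\lambda) = \epsilon$. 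Inverting through \eqref{2:eq:lambdatau} and selecting the principal branch (forced by the constraint $\lambda \le 0$) gives $\lambda = -W_0(-e^{-(1+\epsilon)})-1$, with all remaining coordinates equal to $0$, exactly as claimed.

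I expect the crux to be the convexity of $h$: both the objective and the constraint function are non-convex in the original variables, so showing that their composition is convex — which is precisely what makes concentration rather than spreading optimal — is the essential point. Once the clean expression $h'(t)=2/(1+\lambda)^2$ is in hand, the monotone dependence of $\lambda$ on $t$ settles the matter, but care is needed to justify the reparametrization as a genuine bijection onto $[0,\infty)$ and to verify the boundary behavior as $\lambda\to -1^+$.
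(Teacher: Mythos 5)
Your proof is correct, and it takes a genuinely different route from the paper's. The paper argues via the KKT conditions of \eqref{3:eq:g_eps2}: stationarity forces all strictly negative coordinates to share a common value $\lambda^*$, the multiplier computation shows the budget constraint is active, and the number $n$ of negative coordinates is then determined by showing that $\epsilon\left(\frac{\lambda^*}{1+\lambda^*}\right)^2/(\lambda^*-\log(1+\lambda^*))$ is decreasing on $(-1,0)$, so the smallest admissible $n$, namely $n=1$, wins. Your change of variables $t_i=\psi(\lambda_i)$ replaces all of this with a single structural observation: the transformed per-coordinate objective $h=\phi\circ\psi^{-1}$ satisfies $h'(t)=2/(1+\lambda)^2$, hence is increasing and convex with $h(0)=0$, and superadditivity of such functions forces the entire budget onto one coordinate. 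This is cleaner and more global -- it sidesteps the KKT machinery (and any constraint-qualification concerns), the enumeration over integer $n$, and it delivers saturation of the budget constraint and uniqueness up to permutation for free (the superadditivity inequality is strict when both arguments are positive, since $h'$ is strictly increasing). Your computations check out: $\phi'(\lambda)=2\lambda/(1+\lambda)^3$, $\psi'(\lambda)=\lambda/(1+\lambda)$, their ratio is $2/(1+\lambda)^2$, and $\psi$ is indeed a decreasing bijection of $(-1,0]$ onto $[0,\infty)$, so the final inversion through \eqref{2:eq:lambdatau} with the principal branch is forced by $\lambda\le 0$. One small erratum in your closing commentary: the objective and constraint are in fact both convex in the original variables ($\phi''(\lambda)=(2-4\lambda)/(1+\lambda)^4>0$ and $\psi''(\lambda)=1/(1+\lambda)^2>0$ on $(-1,0]$), so the difficulty is not non-convexity but that one is \emph{maximizing} a convex function; this does not affect your argument, which never uses the claim.
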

\begin{proof}
We start the proof analyzing the KKT conditions of \eqref{3:eq:g_eps2}. For brevity, define $f(\bm{\lambda}) = \sum_i \left(\frac{\lambda_i}{1+\lambda_i}\right)^2$ and $h(\bm{\lambda}) = \sum_i^N \lambda_i - \log(1+\lambda_i)-\epsilon$. Denote by $\alpha$ the dual variable associated with the constraint $h(\bm{\lambda})\leq0$ and $\beta_i$ the one associated with each constraint $\lambda_i\leq0$. Then the stationarity and complementary slackness KKT conditions for \eqref{3:eq:g_eps2} can be expressed as:
\begin{equation}
    \begin{split}
        -\frac{\delta f}{\delta \lambda_i^*}+\alpha\;\frac{\delta h}{\delta \lambda_i^*}+\beta_i\; = 0, \quad \lambda_i^*\;\beta_i=0,\quad \text{for all }i,\quad \alpha\;h(\bm{\lambda}^*)=0
    \end{split}
\end{equation}
Due to complementary slackness, all strictly negative $\lambda_i$ will satisfy $\beta_i=0$ and their stationarity condition simplifies to:
\begin{equation}
    \begin{split}
        -\frac{\delta f}{\delta \lambda_i^*}+\alpha\;\frac{\delta h}{\delta \lambda_i^*}+\beta_i\; = \frac{-\lambda_i^*}{\left(1+\lambda_i^*\right)^3}+\alpha\;\frac{\lambda_i^*}{1+\lambda_i^*}=0\implies \left(1+\lambda_i^*\right)^2\alpha =1
    \end{split}
\end{equation}
This result has two implications: first it implies that $\alpha>0$, which by complementary slackness implies that $h(\bm{\lambda}^*)=0$. Second, as the dual variable $\alpha$ does not depend on $i$, all strictly negative $\lambda_i^*$ must share the same value. Assume there the first $n$ eigenvalues $\lambda_i^*$ are strictly negative and share the value $\lambda_i^*=\lambda^*$, and the remaining $N-n$ are $0$. Then the objective $f(\bm{\lambda})$ and constraint $h(\bm{\lambda}^*)=0$ simplify to:
\begin{equation}
    \begin{split}
       f(\bm{\lambda})=f(\lambda^*) = n \left(\frac{\lambda^*}{1+\lambda^*}\right)^2, \quad h(\bm{\lambda})=h(\lambda^*) = n\left(\lambda^* - \log(1+\lambda^*)\right)- \epsilon=0
    \end{split}
\end{equation}
Finding the value of $\lambda^*$ is then equivalent to finding the value of $n$.  Solving for $n$ in $h(\bm{\lambda})$ and using this value  in $f(\bm{\lambda})$ leads to:
\begin{equation}
    \begin{split}
       f(\lambda^*) = \frac{\epsilon}{\lambda^* - \log(1+\lambda^*)} \left(\frac{\lambda^*}{1+\lambda^*}\right)^2
    \end{split}
\end{equation}
The value of $\lambda^*$ will be the one that maximizes the above expression while satisfying $h(\lambda^*)=0$ for an integer value of $n$ between $1$ and $N$ (we ignore the possibility of $n=0$ as it only applies to the trivial case $\epsilon=0$). To find the value of $n$, we analyze the derivative of $f(\lambda^*)$ on the interval $-1<\lambda^*<0$:
\begin{equation}
    f^\prime(\lambda^*) = \epsilon\frac{\lambda^*\left[2(\lambda^* - \log(1+\lambda^*))-(\lambda^*)^2 \right]}{\left(1+\lambda_i^*\right)^3(\lambda^* - \log(1+\lambda^*))^2}
\end{equation}
The denominator is strictly positive on the interval and the numerator has the same sign as $\lambda^*$, since, as shown by the Taylor series of $\lambda^* - \log(1+\lambda^*)$:
\begin{equation}
    \lambda - \log(1+\lambda) = \sum_{k=2}^\infty (-1)^k\frac{\lambda^k}{k} > \frac{\lambda^2}{2}\quad\text{for}\quad-1<\lambda<0
\end{equation}
Thus  $f(\lambda^*)$ is a decreasing function on the interval $-1<\lambda^*<0$. In order to maximize $f(\lambda^*)$, the value of $\lambda^*$ will be the smallest possible while still satisfying the constraint $h(\lambda^*)=0$ for an integer value of $n$ between $1$ and $N$. Expressing this constraint as:
\begin{equation}
    \lambda^* - \log(1+\lambda^*)= \frac{\epsilon}{n}
\end{equation}
and noting that $\lambda^* - \log(1+\lambda^*)$ is also a decreasing function on the interval of interest, the most negative $\lambda^*$ corresponds to the smallest $n$, i.e. $n=1$. Finally, isolating the value of $\lambda^*$ through the Lambert function:
\begin{equation}
    \lambda_1^* = \lambda^* = - W_0(-e^{-\left(1+\epsilon\right)})-1,\quad \lambda_i^* =0\quad \text{for }i>1
\end{equation}
yields the optimal solution and finishes the proof.
\end{proof}

Using the optimizer for problem \eqref{3:eq:g_eps2} obtained in Lemma \ref{3:lemma:optilamb} we can derive the bound presented in \eqref{1:eq:finalbound}:
\begin{theorem}
Let $X_*$  be the optimizer of \eqref{1:eq:minlogdet} and  $X_f$  a feasible solution for \eqref{1:eq:minlogdet}, respectively. If the optimality gap of these solutions satisfies: 
\begin{equation}
    \epsilon \geq \log\left(|X_*|\right) -\log\left(|X_f|\right) \geq 0
\end{equation}
then the Frobenius norm of their difference is bounded as:
\begin{equation}
     ||X_*-X_f||_F^2 \leq ||X_f||_2^2 \left(\frac{\sqrt[3]{1-e^{-\epsilon}}}{1-\sqrt[3]{1-e^{-\epsilon}}}\right)^2
\end{equation}
\label{3:thm:finalbound}
\end{theorem}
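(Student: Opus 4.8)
The plan is to assemble the already-established pieces and reduce the entire statement to a single closed-form evaluation of $g(\epsilon)$. First I would recall from \eqref{2:eq:norm_ineq} that
\[
||X_*-X_f||_F^2 \leq ||X_f||_2^2\,\|I-R^{-1}R^{-T}\|_F^2 = ||X_f||_2^2 \sum_i \left(\frac{\lambda_i}{1+\lambda_i}\right)^2,
\]
and that the eigenvalues $\lambda_i$ satisfy exactly the constraints defining \eqref{2:eq:g_eps}. Hence the surviving sum is bounded above by the optimal value $g(\epsilon)$ of \eqref{2:eq:g_eps}, which by Lemma~\ref{31:lemma:lambdbnd} coincides with the optimal value of the reduced problem \eqref{3:eq:g_eps2}. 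The whole theorem thus amounts to computing $g(\epsilon)$ and bounding it in closed form.

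Next I would substitute the explicit optimizer from Lemma~\ref{3:lemma:optilamb}. Since all but one eigenvalue vanish at the optimum, only one term survives, giving $g(\epsilon) = \left(\frac{\lambda^*}{1+\lambda^*}\right)^2$ with $\lambda^* = -W_0(-e^{-(1+\epsilon)})-1$. Writing $w := W_0(-e^{-(1+\epsilon)})$, one has $1+\lambda^* = -w$, so $\frac{\lambda^*}{1+\lambda^*} = \frac{w+1}{w}$ and therefore $g(\epsilon) = \left(\frac{w+1}{w}\right)^2$. All of this is direct bookkeeping from the lemmas.

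The core step is to control this expression with the closed-form upper bound on $W_0$ from Theorem~\ref{3:thm:bounds}, used in the form \eqref{3:eq:boundlamb2} with $u=\epsilon$, which yields $w+1 \leq \sqrt[3]{1-e^{-\epsilon}}$. The one place requiring genuine care -- and the step I expect to be the main obstacle -- is the direction of the inequality: an upper bound on $w+1$ must translate into an upper bound on $g(\epsilon)$. To verify this I would set $a := w+1 \in (0,1)$ and study $\phi(a) = \left(\frac{a}{a-1}\right)^2$. Since $\frac{d}{da}\frac{a}{a-1} = \frac{-1}{(a-1)^2} < 0$, the quantity $\frac{a}{a-1}$ decreases from $0$ toward $-\infty$ as $a$ ranges over $(0,1)$, so it stays negative while its magnitude grows; consequently $\phi(a)$ is strictly increasing on $(0,1)$. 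Replacing $a=w+1$ by its upper bound can therefore only increase $g(\epsilon)$, giving
\[
g(\epsilon) = \left(\frac{w+1}{w}\right)^2 \leq \left(\frac{\sqrt[3]{1-e^{-\epsilon}}}{\sqrt[3]{1-e^{-\epsilon}}-1}\right)^2 = \left(\frac{\sqrt[3]{1-e^{-\epsilon}}}{1-\sqrt[3]{1-e^{-\epsilon}}}\right)^2,
\]
where the last equality holds because squaring removes the sign of the denominator. Combining this with the first display produces exactly \eqref{1:eq:finalbound}, completing the proof; every step other than the monotonicity argument in this final paragraph is a direct invocation of the results already proved.
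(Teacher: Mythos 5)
Your proposal is correct and follows essentially the same route as the paper: chain \eqref{2:eq:norm_ineq} with the optimal value of \eqref{2:eq:g_eps} from Lemma~\ref{3:lemma:optilamb}, then substitute the Lambert-function upper bound \eqref{3:eq:boundlamb2}. The one thing you add that the paper leaves implicit is the monotonicity check that $a \mapsto \left(\frac{a}{a-1}\right)^2$ is increasing on $(0,1)$, which is exactly what justifies replacing $W_0(-e^{-1-\epsilon})+1$ by its upper bound inside the squared ratio; that verification is correct and worth having.
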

\begin{proof}
The proof follows directly from combining the inequalities in \eqref{2:eq:norm_ineq} and the solution of the optimization problem \eqref{2:eq:g_eps}, leading to:
\begin{equation}
\begin{split}
        ||X_f-X_*||_F^2 \leq ||X_f||_2^2|| \left(Q+I\right)^{-1}Q||_F^2\,
        &\leq ||X_f||_2^2\left( \frac{\lambda^*}{1+\lambda^*} \right)^2\\
        &= ||X_f||_2^2\left( \frac{W_0(-e^{-1-\epsilon}) + 1}{-W_0(-e^{-1-\epsilon})} \right)^2
\end{split}
\end{equation}
Finally, applying the bounds developed in Theorem \ref{3:thm:bounds} and \eqref{3:eq:boundlamb2} leads to:
\begin{equation}
        ||X_f-X_*||_F^2 \leq  ||X_f||_2^2 \left(\frac{\sqrt[3]{1-e^{-\epsilon}}}{1-\sqrt[3]{1-e^{-\epsilon}}}\right)^2
\end{equation}
\end{proof}

\section{Numerical Application}
As an example application, we apply the bounds derived in Theorem \ref{3:thm:bounds} in the context of interior point method solvers. More explicitly, we will study the use of termination criteria based on the normalized Euclidean distance between the optimizer $X_*$ of \eqref{1:eq:maxdet} and a suboptimal solution $X_f$, instead of the standard criteria based on the optimality gap $\epsilon$ of the objective function. As discussed in the Introduction, in many applications the parameters of interest are entries of $X$ or  functions of those entries. Hence, these novel termination criteria can be directly related to the entry-wise discrepancies between $X_*$ and $X_f$ and thus provide a way to terminate the algorithm based on the suboptimality of these parameters.\\
\\
From Theorem \ref{3:thm:finalbound} we have that the normalized Euclidean distance is bounded as:
\begin{equation}
    \frac{||X_*-X_f||_F}{||X_f||_2} \leq  \left|\frac{\sqrt[3]{1-e^{-\epsilon}}}{1-\sqrt[3]{1-e^{-\epsilon}}}\right| = g(\epsilon)
    \label{4:eq:termibound}
\end{equation}
Due to the self-concordance of the log-determinant, interior point methods can provide bounds on $\epsilon$ for the suboptimal solutions $X_f$, but without knowledge of $X_*$ (which by definition is never available) cannot generally provide bounds on the Euclidean distance between $X_f$ and $X_*$. Bounds of the form \eqref{4:eq:termibound} are then of interest, as they allow to connect $||X_*-X_f||_F$ to the optimality bound $\epsilon$  provide by the interior point algorithm.\\
\\
As a test bed, we study this termination criterion on the minimum volume ellipsoid covering problem, an instance of the $\textit{maxdet}$ problem. This problem is defined as follows: given a set $\mathcal{Y} = \{y_1,\dots,y_M\}$ of $M$ vectors $y_i\in\mathbb{R}^n$ denoting points in an $N$-dimensional space, the minimum volume ellipsoid covering problem tries to find the minimum volume ellipsoid that contains all those points \cite{Vander1998}. Describing the ellipsoid as $\varepsilon = \{y\, | \,||X y+b||_2\leq 1\}$ for $X\in \mathcal{S}_+^N$, the volume of the ellipsoid $\varepsilon$ is proportional to $|X^{-1}|$. Minimizing $|X^{-1}|$ (or equivalently its logarithm) is equivalent to the maximization of $|X|$, from which follows that the minimum volume ellipsoid covering problem is an instance of the more general $\textit{maxdet}$ problem \eqref{1:eq:maxdet} and is defined as:
\begin{equation}
    \begin{split}
        X_* = \operatorname*{argmin}_{X\succeq 0, \,b}\; -\log\left(|X|\right)\quad\text{s.t.}\quad||X\, y_i -b||_2^2 \leq 1\quad \forall \;y_i \in \mathcal{Y} 
    \end{split}
    \label{4:eq:minellip}
\end{equation}
To test the quality of the bound \eqref{4:eq:termibound}, we generate a set of points $\mathcal{Y}$ and numerically solve \eqref{4:eq:minellip} using the Matlab package CVX \cite{cvx,gb08} and the solver SDPT3 \cite{toh1999sdpt3}. We first solve \eqref{4:eq:minellip} using the best tolerance available in CVX, i.e. $\delta \approx 1.5 \times 10^{-8}$, and take the result of that numerical optimization to be the global optimum $X_*$. Afterwards, we solve \eqref{4:eq:minellip} using a set of higher tolerances ranging from $\delta = 1$ to $\delta = 10^{-8}$, leading to suboptimal solutions which we label as $X_f$. For each suboptimal optimizer $X_f$, we compute the normalized Euclidean distance as in \eqref{4:eq:termibound} and the upper bound $g(\epsilon)$ from the optimality bound $\epsilon = \log\left(|X_*|\right)-\log\left(|X_f|\right)$. The set $\mathcal{Y}$ is generated by sampling $M = 100$ points from a $N$-dimensional standard normal distribution with $N=50$.\\
\\
\begin{figure}
    \centering
    \includegraphics[width = 8cm]{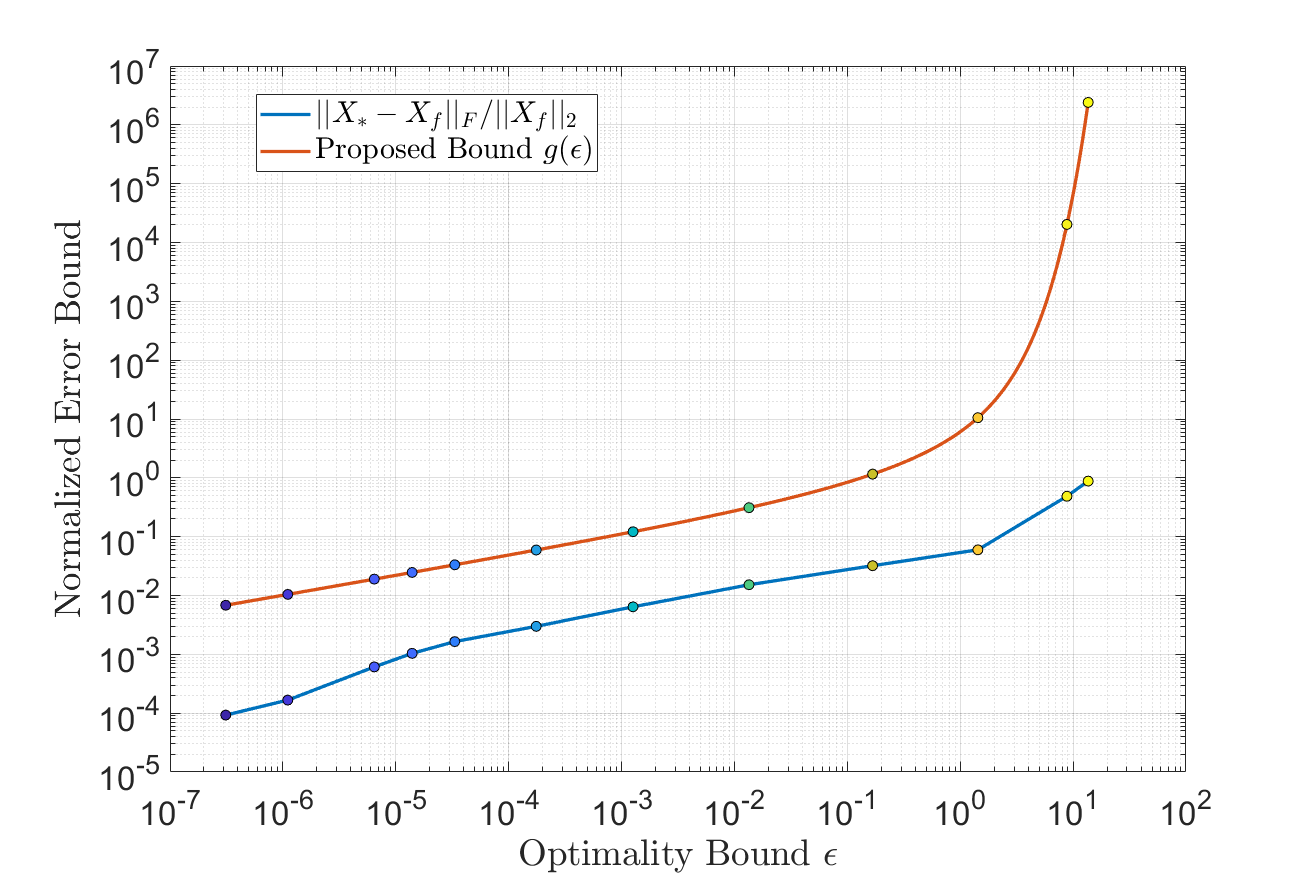}
    \caption{Normalized error bound $ ||X_*-X_f||_F /||X_f||_2$ of suboptimal solutions $X_f$ computed at different tolerances $1\geq \delta \geq 10^{-8}$ versus their optimality bound $\epsilon = \log\left(|X_*|\right)-\log\left(|X_f|\right)$, in blue, and the proposed upper bound \eqref{4:eq:termibound} in red. The proposed bound grows unbounded for large $\epsilon$, but converges to $g(\epsilon) \approx \sqrt[3]{\epsilon}$ for small $\epsilon$, providing a simple estimate of the normalized error from the optimality bound $\epsilon$.}
    \label{fig:fig4}
\end{figure}
Figure \ref{fig:fig4} shows, in blue, the normalized Euclidean distance of each suboptimal solution $X_f$ as a function of their optimality bound $\epsilon$, as well as the value provided by the upper bound $g(\epsilon)$, in red. While the bound seems to diverge for large values of $\epsilon$, it converges to the the limit behavior $g(\epsilon) \approx \sqrt[3]{\epsilon}$ as $\epsilon \to 0$. Despite being a conservative bound, the bound can be used to terminate the optimization procedure (or, conversely, to add further steps to refine the solution) by comparing it to a desired Frobenius norm value, providing a guaranteed normalized error between $X_f$ and $X_*$ at the end of the optimization algorithm.

\section{Future Work and Conclusions}\label{4:sec}

In this work we have developed tractable bounds on the Frobenius norm of the error between feasible suboptimal solutions to the \textit{maxdet} problem \eqref{1:eq:minlogdet} and its global optimizer. By combining the traditional bounds on the log-determinant \eqref{1:eq:costbound} provided by duality theory and the interpretabiltiy of the Frobenius norm, these bounds provide a way to study the quality of approximations to the analytic center of an LMI set with respect to the elements of the LMI argument, which are usually the parameters of interest for many applications.\\
\\
The derivation of these bounds has been done through analysis of the KKT conditions of the \textit{maxdet} problem as well as through the development of novel bounds on the Lambert function $W_{0}(x)$. In future work we will explore the extension of the bounds derived in Section \ref{3:sec} to expressions dependant on the norm of the optimizer $||X_*||_2^2$ as well as metrics beyond the Frobenius norm, like the geometry-aware Jensen-Bregman LogDet divergence.

\section*{Acknowledgments}
The authors are indebted to the anonymous reviewers for the many suggestions to improve the proofs in the original manuscript. This work was partially supported by NSF grants  IIS--1814631, ECCS—1808381 and CNS--2038493, and AFOSR grant FA9550-19-1-0005.

\bibliographystyle{siamplain}
\bibliography{references}
\end{document}